\documentclass[11pt]{article}%
\usepackage{amsmath}
\usepackage{amsfonts}
\usepackage{amssymb}
\usepackage{graphicx}%
\setcounter{MaxMatrixCols}{30}
\providecommand{\U}[1]{\protect\rule{.1in}{.1in}}
\newtheorem{theorem}{Theorem}

\newtheorem{lemma}[theorem]{Lemma}

\newenvironment{proof}[1][Proof]{\noindent\textbf{#1.} }{\ \rule{0.5em}{0.5em}}
\begin{document}

\title{Deforming the Window of a Gabor Frame: the Ellipsoid Method}
\author{Maurice A. de Gosson\\University of Vienna, NuHAG}
\maketitle

\begin{abstract}
In a recent paper in \textit{Appl. Comput. Harmon. Anal.} \textbf{38}(2),
196--221 (2014) we have introduced and studied the notion of weak Hamiltonian
deformation of a Gabor (=Weyl--Heisenberg) frame. In this Note we use these
results to prove that one can modify the window of a Gabor frame using certain
metaplectic operators provided that one modifies only a finite number of
points of the frame lattice.

\end{abstract}

\section{Introduction}

In a previous work \cite{go14} we initiated the study of weak Hamiltonian
deformations of Gabor frames using ideas borrowed from semiclassical
mechanics. In this Note we show that this method allows to modify the window
of a Gabor frame using families of metaplectic operators while only replacing
a finite number of lattice points with new points contained in an arbitrarily
chosen phase space (or time-frequency) ellipsoid. As in \cite{go14} we define
Gabor frames as follows: let $\phi$ be a non-zero square integrable function
on ${\mathbb{R}}^{n}$ (hereafter called the \textit{window}) and $\Lambda$ a
discrete subset of ${\mathbb{R}}^{2n}$ (the \textit{lattice}). The
corresponding \textit{Gabor, or Weyl--Heisenberg, }system is the set%
\begin{equation}
{\mathcal{G}}^{\hbar}(\phi,\Lambda)=\{\widehat{T}(z)\phi:z\in\Lambda
\}\label{frame}%
\end{equation}
where $\widehat{T}(z)=e^{-i\sigma(\hat{z},z)/\hbar}$ is the Heisenberg
operator. ${\mathcal{G}}^{\hbar}(\phi,\Lambda)$ is called a $\hbar
$-\textit{frame} for $L^{2}({\mathbb{R}}^{n})$ if there exist constants
$A,B>0$ (the \textit{frame bounds}) such that
\begin{equation}
A||\psi||^{2}\leq\sum_{z\in\Lambda}|(\psi|\widehat{T}(z)\phi)|^{2}\leq
B||\psi||^{2}\label{frame1}%
\end{equation}
for every $\psi\in L^{2}({\mathbb{R}}^{n})$. When the parameter $\hbar$ is
chosen equal to $1/2\pi$ the notion of $\hbar$-frame coincides with the usual
\textquotedblleft naive\textquotedblright\ notion of Gabor frame \cite{gr01}.
The advantage of using definition (\ref{frame1}) is that it highlights the
symplectic covariance of Gabor frames: if $S\in\operatorname*{Sp}(n)$ is a
symplectic automorphism of ${\mathbb{R}}^{2n}$ then ${\mathcal{G}}^{\hbar
}(\widehat{S}\phi,S\Lambda)$ is a $\hbar$-frame if and only it is the case for
${\mathcal{G}}^{\hbar}(\phi,\Lambda)$ ($\widehat{S}$ is anyone of the two
metaplectic operators corresponding to $S$).

In this Note we prove a simple consequence of the weak Hamiltonian deformation
theory; we call it the \textquotedblleft ellipsoid method\textquotedblright%
\ because it relies on the fact that a phase space ellipsoid may be viewed as
the energy hypersurface of a Hamilton function that is a quadratic form in the
$x_{j},p_{k}$ variables. We will see in a forthcoming work that this method
can be extended to arbitrary Hamiltonian functions.

\section{Weak Hamiltonian Deformations}

Let $H\in C^{2}(\mathbb{R}^{2n})$; we denote by $(f_{t}^{H})$ the Hamiltonian
function $H$. In \cite{go14} we proved the following covariance result for
$\hbar$-frames:

\begin{theorem}
\label{thm1}Let $\Lambda_{t}=f_{t}^{H}(\Lambda)$, $z_{t}=f_{t}^{H}(z_{0})$,
$\widehat{S}_{t}(z_{0})=Df_{t}^{H}(z_{0})$ ($z_{0}$ an arbitrary point in
$\mathbb{R}^{2n}$). For $\phi\in L^{2}(\mathbb{R}^{n})$ set
\begin{equation}
\phi_{t}=\widehat{T}(z_{t})\widehat{S}_{t}(z_{0})\widehat{T}(z_{0})^{-1}%
\phi.\label{fit}%
\end{equation}
The Gabor system ${\mathcal{G}}^{\hbar}(\phi_{t},\Lambda_{t})$ is a $\hbar
$-frame if and only if ${\mathcal{G}}^{\hbar}(\phi,\Lambda)$ is a $\hbar
$-frame; when this the case both frames have the same bounds.
\end{theorem}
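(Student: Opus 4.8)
The plan is to establish the equivalence purely algebraically, from the covariance (intertwining) relations of the Heisenberg and metaplectic operators, so that the equality of frame bounds comes out for free. Write $\widehat{\mathcal{A}}_{t}=\widehat{T}(z_{t})\widehat{S}_{t}(z_{0})\widehat{T}(z_{0})^{-1}$, so that $\phi_{t}=\widehat{\mathcal{A}}_{t}\phi$. The Jacobian $Df_{t}^{H}(z_{0})$ is a symplectic matrix (the flow $f_{t}^{H}$ being a symplectomorphism), hence $\widehat{S}_{t}(z_{0})$ is metaplectic and $\widehat{\mathcal{A}}_{t}$, a product of three unitaries, is unitary on $L^{2}(\mathbb{R}^{n})$; in particular $\phi_{t}\in L^{2}(\mathbb{R}^{n})$. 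Two standard facts carry the whole argument: the metaplectic covariance relation $\widehat{S}\,\widehat{T}(z)\,\widehat{S}^{-1}=\widehat{T}(Sz)$, valid for $S\in\operatorname{Sp}(n)$ and either of its two metaplectic lifts $\widehat{S}$, and the Weyl relations $\widehat{T}(u)\widehat{T}(v)=e^{\pm i\sigma(u,v)/2\hbar}\widehat{T}(u+v)$, which yield $\widehat{T}(b)\widehat{T}(w)\widehat{T}(b)^{-1}=c(b,w)\widehat{T}(w)$ with $|c(b,w)|=1$: conjugation by a Heisenberg operator changes $\widehat{T}(w)$ only by a phase.

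First I would conjugate an arbitrary $\widehat{T}(z)$ by $\widehat{\mathcal{A}}_{t}$. Writing $S=Df_{t}^{H}(z_{0})$, the innermost conjugation by $\widehat{T}(z_{0})^{-1}$ yields a phase, the middle conjugation by $\widehat{S}_{t}(z_{0})$ turns $\widehat{T}(z)$ into $\widehat{T}(Sz)$, and the outer conjugation by $\widehat{T}(z_{t})$ yields another phase, so that
\begin{equation}
\widehat{\mathcal{A}}_{t}\,\widehat{T}(z)\,\widehat{\mathcal{A}}_{t}^{-1}=e^{i\gamma_{t}(z)}\,\widehat{T}(Sz),\qquad\gamma_{t}(z)\in\mathbb{R}.\label{conj}
\end{equation}
Fix $\psi\in L^{2}(\mathbb{R}^{n})$ and put $\eta=\widehat{\mathcal{A}}_{t}^{-1}\psi$, so $\|\eta\|=\|\psi\|$; then for $w=Sz$, using unitarity of $\widehat{\mathcal{A}}_{t}$ and (\ref{conj}),
\begin{align*}
(\psi|\widehat{T}(w)\phi_{t})&=(\widehat{\mathcal{A}}_{t}\eta|\widehat{T}(Sz)\widehat{\mathcal{A}}_{t}\phi)=(\eta|\widehat{\mathcal{A}}_{t}^{-1}\widehat{T}(Sz)\widehat{\mathcal{A}}_{t}\phi)\\
&=e^{-i\gamma_{t}(z)}(\eta|\widehat{T}(z)\phi),
\end{align*}
hence $|(\psi|\widehat{T}(w)\phi_{t})|^{2}=|(\eta|\widehat{T}(z)\phi)|^{2}$. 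Summing over $z\in\Lambda$, equivalently over $w\in S\Lambda$, gives $\sum_{w\in S\Lambda}|(\psi|\widehat{T}(w)\phi_{t})|^{2}=\sum_{z\in\Lambda}|(\eta|\widehat{T}(z)\phi)|^{2}$; since $\psi\mapsto\eta$ is a bijective isometry of $L^{2}(\mathbb{R}^{n})$, the double inequality (\ref{frame1}) holds for ${\mathcal{G}}^{\hbar}(\phi_{t},S\Lambda)$ with constants $A,B$ if and only if it holds for ${\mathcal{G}}^{\hbar}(\phi,\Lambda)$.

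It remains to identify $S\Lambda=Df_{t}^{H}(z_{0})\Lambda$ with $\Lambda_{t}=f_{t}^{H}(\Lambda)$. The affine symplectomorphism carried by $\widehat{\mathcal{A}}_{t}$ is $z\mapsto z_{t}+S(z-z_{0})$, i.e. the first-order (tangent) approximation of $f_{t}^{H}$ at $z_{0}$, and it differs from $z\mapsto Sz$ only by the fixed vector $c_{t}=z_{t}-Sz_{0}$. A one-line repetition of the preceding computation with the single Heisenberg operator $\widehat{T}(c_{t})$ (conjugating $\psi$ by $\widehat{T}(-c_{t})$) shows that translating a Gabor lattice by a fixed vector changes neither inequality (\ref{frame1}) nor its bounds, so $S\Lambda$ may be replaced by $z_{t}+S(\Lambda-z_{0})$; and when $H$ is a quadratic form---the case on which the ellipsoid method rests---the flow $f_{t}^{H}$ is itself affine and $Df_{t}^{H}$ is independent of the base point, whence $z_{t}+S(\Lambda-z_{0})=f_{t}^{H}(\Lambda)=\Lambda_{t}$ exactly. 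I do not expect a genuine obstacle here, since the statement is an exact identity rather than an estimate; the only thing to be careful about is bookkeeping---verifying that every phase produced along the way is real, hence invisible to $|\cdot|^{2}$, and that $z\mapsto Sz$ maps $\Lambda$ bijectively onto $S\Lambda$ so that the two sums have identical terms. The conceptual point is that the window is transported by the \emph{linearized} flow $Df_{t}^{H}(z_{0})$ pinned at the single point $z_{0}$, which is exactly what keeps the conjugation formula (\ref{conj}) exact---this is the content of ``weak'' in weak Hamiltonian deformation.
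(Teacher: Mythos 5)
Your conjugation computation is correct as far as it goes, and it uses exactly the ingredients the paper points to (the Weyl commutation/addition relations for $\widehat{T}$ and the metaplectic covariance $\widehat{S}\,\widehat{T}(z)\,\widehat{S}^{-1}=\widehat{T}(Sz)$); note that this Note does not actually prove Theorem \ref{thm1} but defers to \cite{go14}, so the comparison can only be with that indicated strategy. The genuine gap is in your last step, and you have in fact put your finger on it yourself: what your argument controls is the Gabor system over the lattice $z_{t}+Df_{t}^{H}(z_{0})(\Lambda-z_{0})$, the image of $\Lambda$ under the \emph{linearized} flow at $z_{0}$, whereas the theorem asserts the equivalence for $\Lambda_{t}=f_{t}^{H}(\Lambda)$, the image under the \emph{exact} flow. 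These coincide only when $f_{t}^{H}$ is affine, while the theorem is stated for arbitrary $H\in C^{2}(\mathbb{R}^{2n})$. Your proof therefore establishes only the affine (at most quadratic $H$) special case, and no purely algebraic conjugation argument can do better: demanding that $\widehat{T}(f_{t}^{H}(z))\widehat{T}(z_{t})\widehat{S}_{t}(z_{0})\widehat{T}(z_{0})^{-1}$ equal a fixed unitary times $\widehat{T}(z)$ up to a phase for every $z$ forces $f_{t}^{H}$ to be affine.

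The fallback you offer --- that the quadratic case is ``the case on which the ellipsoid method rests'' --- does not close the gap, because it is contradicted by the way Theorem \ref{thm1} is used here. In the proof of Theorem \ref{thm2} it is applied to the truncated Hamiltonian $H^{\varepsilon}=H\chi_{\varepsilon}$, whose flow equals $S_{t}$ near the ellipsoid and the identity far from it; this flow is deliberately \emph{not} affine, and that non-affineness is the whole point, since it is what lets only the finitely many lattice points of $F$ move while $\Lambda^{\prime}$ stays put. With your version of Theorem \ref{thm1}, Theorem \ref{thm2} would only yield the frame ${\mathcal{G}}^{\hbar}(\widehat{S}_{t}\phi,S_{t}\Lambda)$, not ${\mathcal{G}}^{\hbar}(\widehat{S}_{t}\phi,\Lambda^{\prime}\cup F_{t})$. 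The missing ingredient is precisely the content of ``weak Hamiltonian deformation'' in \cite{go14}: a justification that the window transported by the single linearization $Df_{t}^{H}(z_{0})$ may be paired with the nonlinearly deformed lattice $f_{t}^{H}(\Lambda)$ while preserving the frame inequalities and their bounds. That step is not a formal consequence of the Weyl relations and has to be supplied, or quoted, separately.
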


The proof of this result is bases on the commutation and addition properties
of the Heisenberg operators, and on their symplectic covariance
\cite{go06,go11}. Formula (\ref{fit}) corresponds (up to an unimportant phase
factor) to the semiclassical propagation \cite{he75,li86} of a wavepacket
centered at the point $z_{0}$.

Theorem \ref{thm1} has been recently applied \cite{gogrro15} to the study of
the stability of Gabor frames under small time Hamiltonian evolutions.

\section{Main Result: Precise Statement and Proof}

We assume from now on that the lattice $\Lambda$ is \textquotedblleft$\delta
$-separated\textquotedblright: there exists $\delta>0$ such that
$|z-z^{\prime}|>0$ for all $(z,z^{\prime})\in\Lambda\times\Lambda$ such that
$z\neq z^{\prime}$. Under this assumption we have:

\begin{lemma}
\label{Lemma1}Let $\Sigma$ be a closed hypersurface in phase space
$\mathbb{R}^{2n}$ and let
\[
\Sigma_{\varepsilon}=%
{\textstyle\bigcup\limits_{z\in\Sigma}}
\overline{B}^{2n}(z,\varepsilon)
\]
be the \textquotedblleft$\varepsilon$-thickening\textquotedblright\ of
$\Sigma$ ($\overline{B}^{2n}(z,\varepsilon)$ is the closed ball with radius
$\varepsilon$ centered at $z$). There exists $\varepsilon$ such that
$\Sigma_{\varepsilon}\cap\Lambda=\Sigma\cap\Lambda$. 
\end{lemma}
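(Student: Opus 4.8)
The plan is to use the compactness of the closed hypersurface $\Sigma$ (a \emph{closed} hypersurface being by convention compact and boundaryless, as will be the case for the ellipsoids we have in mind) together with the $\delta$-separation of $\Lambda$; no deformation theory is needed at this point. The starting point is the elementary reformulation
\[
\Sigma_{\varepsilon}=\{w\in\mathbb{R}^{2n}:d(w,\Sigma)\leq\varepsilon\},\qquad d(w,\Sigma)=\inf_{z\in\Sigma}|w-z|,
\]
which exhibits $(\Sigma_{\varepsilon})_{\varepsilon>0}$ as an increasing family with $\bigcap_{\varepsilon>0}\Sigma_{\varepsilon}=\Sigma$. Since $\Sigma\subset\Sigma_{\varepsilon}$ for every $\varepsilon$, the inclusion $\Sigma\cap\Lambda\subset\Sigma_{\varepsilon}\cap\Lambda$ is automatic, and the whole content of the lemma is to show that for $\varepsilon$ small enough no point of $\Lambda$ lying \emph{off} $\Sigma$ gets caught in $\Sigma_{\varepsilon}$.

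First I would record the (standard) consequence of $\delta$-separation that $\Lambda$ meets every bounded subset of $\mathbb{R}^{2n}$ in a finite set: the balls $B^{2n}(z,\delta/2)$, $z\in\Lambda$, are pairwise disjoint, so only finitely many of their centres can lie in a given bounded region. Because $\Sigma$ is compact, its $1$-thickening $\Sigma_{1}$ is bounded, so
\[
F:=(\Sigma_{1}\cap\Lambda)\setminus\Sigma
\]
is a finite set. For each $z\in F$ one has $d(z,\Sigma)>0$, since $z$ lies in the open set $\mathbb{R}^{2n}\setminus\Sigma$; hence $\varepsilon_{0}:=\min\{\,d(z,\Sigma):z\in F\,\}$ (with the convention $\varepsilon_{0}=1$ when $F=\emptyset$) is a minimum of finitely many strictly positive numbers and therefore satisfies $\varepsilon_{0}>0$.

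Now pick any $\varepsilon$ with $0<\varepsilon<\min(\varepsilon_{0},1)$. To get $\Sigma_{\varepsilon}\cap\Lambda=\Sigma\cap\Lambda$ it remains to check the inclusion $\subset$: if $z\in\Sigma_{\varepsilon}\cap\Lambda$ then $d(z,\Sigma)\leq\varepsilon<1$, so $z\in\Sigma_{1}\cap\Lambda$; were $z\notin\Sigma$ we would have $z\in F$ and hence $d(z,\Sigma)\geq\varepsilon_{0}>\varepsilon$, a contradiction. Thus $z\in\Sigma$, which is what we wanted.

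I do not expect a genuine obstacle in this argument; the one point that must not be glossed over is that the compactness of $\Sigma$ is essential. If $\Sigma$ were merely closed but unbounded the conclusion can fail — for instance with $\Lambda=\mathbb{Z}^{2n}$ and $\Sigma$ an affine hyperplane of irrational slope avoiding $\mathbb{Z}^{2n}$, lattice points approach $\Sigma$ arbitrarily closely and no admissible $\varepsilon$ exists — so the role of the hypothesis is exactly to make $F$ finite and $\varepsilon_{0}$ strictly positive.
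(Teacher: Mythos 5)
Your argument is correct and complete. Note, however, that the paper does not actually supply a proof of Lemma \ref{Lemma1}: it is stated bare, followed only by an informal remark on its meaning, so there is nothing to compare against line by line. What you have written is evidently the intended (and standard) argument: $\delta$-separation makes $\Lambda$ locally finite, compactness of the closed hypersurface makes the set of nearby off-$\Sigma$ lattice points finite, and the minimum of their (strictly positive) distances to $\Sigma$ gives an admissible $\varepsilon$. Two small points in your favour: you correctly read ``closed hypersurface'' as compact and boundaryless (consistent with the paper's later appeal to Jordan--Brouwer), and you flag with a counterexample that compactness is genuinely needed; you also silently repair the paper's typo in the definition of $\delta$-separation, where $|z-z'|>0$ should clearly read $|z-z'|>\delta$ --- as literally stated the hypothesis is vacuous and the lemma would fail (e.g.\ for a dense countable $\Lambda$). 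The only micro-step worth making explicit is that $\Sigma_{\varepsilon}=\{w:d(w,\Sigma)\leq\varepsilon\}$ uses that the infimum $d(w,\Sigma)$ is attained, which holds because $\Sigma$ is compact.
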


This Lemma means that if \ is chosen small enough, the $\varepsilon
$-thickening\ of $\Sigma$ will contain no more points of the lattice than
those which are already on the hypersurface $\Sigma$. 

We note that surprisingly enough, even in the simple case where $\Sigma$ is an
ellipsoid as will be supposed below, the question of determining how many
points of the lattice belong to $\Sigma$ or its interior is in general very
difficult; it harks back to an early result of Landau \cite{la15,la24} and has
unexpected connections with number theory. We recall that a closed
hypersurface is orientable, and hence separates $\mathbb{R}^{2n}$ in two
connected components, one of them (\textquotedblleft the
interior\textquotedblright) being bounded (Jordan--Brouwer theorem).

Let us view the quadratic function $H(z)=\frac{1}{2}Mz\cdot z$ ($M$ positive
definite) as a Hamiltonian function. The corresponding Hamilton equations of
motion are \cite{ar89,go06,go11} $\dot{z}(t)=JMz(t)$ hence their solution is
given by the simple formula $z(t)=e^{tJM}z(0)$. As is customary, we call the
mapping $S_{t}$ taking the initial value $z(0)$ to the solution $z(t)$ at time
$t$ the \textquotedblleft Hamiltonian flow\textquotedblright\ determined by
$H$. We thus have $z(t)=S_{t}z(0)$ and we may identify $S_{t}$ with the matrix
$e^{tJM}$, which is symplectic. We thus have $S_{t}\in\operatorname*{Sp}(n)$
for every $t\in\mathbb{R}$. It now follows from general principles (the path
lifting theorem, see \cite{go06} and the references therein) that when $t$
varies, $S_{t}$ describes a curve in $\operatorname*{Sp}(n)$ passing through
the identity at time $t=0$. This curve can be lifted to a unique curve
$t\longmapsto\widehat{S}_{t}$ in $\operatorname*{Mp}(n)$ passing through the
identity at time $t=0$. Here $\operatorname*{Mp}(n)$ is the metaplectic group
(it is a unitary representation of the double covering group of
$\operatorname*{Sp}(n)$; for a detailed study of $\operatorname*{Mp}(n)$ see
\cite{fo89,go06,go11}).

\begin{theorem}
\label{thm2}Let ${\mathcal{G}}^{\hbar}(\phi,\Lambda)$ be a Gabor system.
Consider the ellipsoid
\[
\Sigma=\{z\in{\mathbb{R}}^{n}:\tfrac{1}{2}Mz\cdot z=E\}
\]
and set $F=\Omega\cap\Lambda=\emptyset$ ($\Omega$ the compact set bounded by
$\Sigma$). Let $S_{t}=e^{tJM}\in\operatorname*{Sp}(n)$ and let $\widehat
{S}_{t}\in\operatorname*{Mp}(n)$ be obtained as described above. Set
$F_{t}=S_{t}(F)$. Then ${\mathcal{G}}^{\hbar}(\widehat{S}_{t}\phi
,\Lambda^{\prime}\cup F_{t})$ is a $\hbar$-frame if and only if ${\mathcal{G}%
}^{\hbar}(\phi,\Lambda)$ is $\hbar$-frame (with same frame bounds). In
particular, if $F=\emptyset$ then hen ${\mathcal{G}}^{\hbar}(\widehat{S}%
_{t}\phi,\Lambda)$ is a $\hbar$-frame if and only if ${\mathcal{G}}^{\hbar
}(\phi,\Lambda)$ is a $\hbar$-frame.
\end{theorem}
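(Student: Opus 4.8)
The plan is to feed the covariance Theorem~\ref{thm1} not the Hamiltonian $H(z)=\tfrac12 Mz\cdot z$ itself -- whose flow $S_t=e^{tJM}$ displaces the whole lattice -- but a modified Hamiltonian $K\in C^{\infty}(\mathbb{R}^{2n})\subset C^{2}(\mathbb{R}^{2n})$ that still generates the flow $S_t$ on the solid ellipsoid $\Omega$ yet has a Hamiltonian vector field supported in a compact neighbourhood of $\Omega$, so that its flow $f_t^{K}$ acts as $S_t$ on $F=\Omega\cap\Lambda$ and as the identity on the remaining lattice points $\Lambda':=\Lambda\setminus F$. If such a $K$ is available then $f_t^{K}(\Lambda)=\Lambda'\cup F_t$ with $F_t=S_t(F)$, and Theorem~\ref{thm1} (with $H$ replaced by $K$) applied at the base point $z_0=0$ -- which lies in the interior of $\Omega$ since $H(0)=0<E$, and is fixed by every $S_t$ -- gives $z_t=0$, hence $\widehat{T}(z_t)=\widehat{T}(z_0)=\mathrm{Id}$ and $\phi_t=\widehat{S}_t\phi$; Theorem~\ref{thm1} then yields exactly the asserted equivalence, with identical frame bounds.

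To build $K$ I would use two facts. First, since $M$ is positive definite the sublevel sets $\{H\le c\}$ are compact ellipsoidal bodies, each invariant under $S_t$ because $H$ is a constant of its own motion; in particular $\Omega=\{H\le E\}$ is $S_t$-invariant and $\Sigma=\partial\Omega$ is compact. Second, by Lemma~\ref{Lemma1} applied to $\Sigma$ there is $\varepsilon>0$ with $\Sigma_{\varepsilon}\cap\Lambda=\Sigma\cap\Lambda\subseteq F$; since $\Sigma$ is compact, for $\eta>0$ small the shell $\{E<H\le E+\eta\}$ is contained in $\Sigma_{\varepsilon}$, so no lattice point $z$ has $E<H(z)\le E+\eta$, i.e. $c^{\ast}:=\inf\{H(z):z\in\Lambda,\ H(z)>E\}>E$ (put $c^{\ast}=E+1$ if there is no such $z$; equivalently, this is just the uniform discreteness of $\Lambda$ near the compact surface $\Sigma$). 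Thus every $z\in\Lambda$ satisfies either $H(z)\le E$ (then $z\in F$) or $H(z)\ge c^{\ast}>E$ (then $z\in\Lambda'$). Now pick $E<c_1<c_2<c^{\ast}$ and $g\in C^{\infty}([0,\infty))$ with $g(s)=s$ for $s\le c_1$, $g$ constant for $s\ge c_2$, and $0\le g'\le 1$, and set $K=g\circ H$. Then $X_K=g'(H)\,X_H$ vanishes on $\{H\ge c_2\}$, hence has compact support and $f_t^{K}$ is complete; $H$ is conserved along $f_t^{K}$; on $\{H\le c_1\}$ (which contains $\Omega$ and a neighbourhood of $0$) one has $K=H$, so $f_t^{K}=S_t$ there; and on $\{H\ge c_2\}$ one has $X_K=0$, so $f_t^{K}=\mathrm{id}$ there. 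Therefore $f_t^{K}(z)=S_t z$ for $z\in F$ and $f_t^{K}(z)=z$ for $z\in\Lambda'$, and since these two types exhaust $\Lambda$, $f_t^{K}(\Lambda)=F_t\cup\Lambda'$, as needed.

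Finally one checks that the metaplectic operator delivered by Theorem~\ref{thm1} is the $\widehat{S}_t$ of the statement: because $f_t^{K}$ equals the linear map $S_t$ on a neighbourhood of $0$, we have $Df_t^{K}(0)=S_t=e^{tJM}$ for all $t$, and lifting the curve $t\longmapsto Df_t^{K}(0)$ from $\operatorname*{Sp}(n)$ to $\operatorname*{Mp}(n)$ through the identity produces precisely the curve $t\longmapsto\widehat{S}_t$ constructed before the theorem. The last assertion is the special case $F=\emptyset$, where $F_t=\emptyset$ and $\Lambda'=\Lambda$. I expect the only genuinely delicate step to be the construction of a single $C^{2}$ Hamiltonian that transports $F$ by exactly $S_t$ while freezing every other lattice point, with nothing trapped in the transition shell $\{c_1<H<c_2\}$; that separation of ``inside'' from ``outside'' lattice points is exactly what Lemma~\ref{Lemma1} is for, and the rest is bookkeeping on top of Theorem~\ref{thm1}.
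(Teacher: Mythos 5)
Your proof is correct and follows essentially the same route as the paper: truncate the quadratic Hamiltonian so that its flow equals $S_t$ on $\Omega$ and the identity on the remaining lattice points (with Lemma \ref{Lemma1} guaranteeing that no lattice point sits in the transition region), then apply Theorem \ref{thm1} at $z_0=0$ so that $\phi_t=\widehat{S}_t\phi$. The only difference is cosmetic: the paper cuts off with a spatial bump function ($H^{\varepsilon}=H\chi_{\varepsilon}$) while you reparametrize the energy ($K=g\circ H$), which if anything makes the invariance of $\Omega$ and the identity $f_t^{K}=S_t$ there slightly more transparent.
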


\begin{proof}
That $S_{t}\in\operatorname*{Sp}(n)$ is clear since $JM\in\mathfrak{sp}(n)$
(the symplectic Lie algebra). Let $\Omega$ be the compact set bounded by
$\Sigma$. Choosing $\varepsilon$ such that $\Sigma_{\varepsilon}\cap
\Lambda=\Sigma\cap\Lambda$ as in Lemma \ref{Lemma1}, let $\chi_{\varepsilon}$
be  a smooth function on ${\mathbb{R}}^{2n}$ such that
\begin{equation}
\chi_{\varepsilon}(z)=\left\{
\begin{array}
[c]{c}%
1\text{ \ \textit{if} \ }z\in\Omega\cup\Sigma_{\varepsilon/2}\\
0\text{ \ \textit{if} \ }z\notin\Omega\cup\Sigma_{\varepsilon}%
\end{array}
\right.  \label{kieps}%
\end{equation}
(the existence of $\chi_{\varepsilon}$ follows from Tietze--Urysohn's lemma).
Setting $H^{\varepsilon}=H\chi_{\varepsilon}$ the support of $H^{\varepsilon}$
is $\Omega\cup\Sigma_{\varepsilon}$ and we have $H^{\varepsilon}(z)=H(z)$ for
$z\in\Omega\cup\Sigma_{\varepsilon/2}$. The Hamiltonian flow $(f_{t}%
^{\varepsilon})$ determined by $H^{\varepsilon}$ thus satisfies
\begin{equation}
f_{t}^{\varepsilon}(z)=\left\{
\begin{array}
[c]{c}%
S_{t}z\text{ \ \textit{if} \ }z\in\Omega\cup\Sigma_{\varepsilon/2}\\
z\text{ \ \textit{if} \ }z\notin\Omega\cup\Sigma_{\varepsilon}%
\end{array}
\right.  .\label{feps}%
\end{equation}
Notice that if $z\in\Sigma$ then $S_{t}z\in\Sigma$ for all $t\in\mathbb{R}$
since $\Sigma$ is an \textquotedblleft energy hypersurface\textquotedblright%
\ for $H$. The result now follows from Theorem \ref{thm1} taking $z_{0}=0$
since $z_{t}=0$ and $S_{t}(0)=S_{t}$.
\end{proof}

It is easy to prove a converse to Theorem \ref{thm2}: given a symplectic
matrix $S=e^{X}$ ($X\in\mathfrak{sp}(n)$, the symplectic Lie algebra) we can
associate a symplectic flow $S_{t}=e^{tX}$, namely that of the ellipsoid
$\Sigma$ with $JM=X$. The case of an arbitrary $S\in\operatorname*{Sp}(n)$ is
slightly more complicated since it leads to time-dependent Hamiltonians: for
an arbitrary symplectic matrix $S$ it is not true in general that $S=e^{X}$
for some $X\in\mathfrak{sp}(n)$; however one can show \cite{HZ,po01} that
there exists a $C^{1}$ path $t\longmapsto S_{t}$ of symplectic matrices such
that $S_{0}=I$ and $S_{1}=S$. To this (not uniquely defined) path corresponds
a generally time-dependent quadratic Hamiltonian $H$ such that $f_{t}%
^{H}=S_{t}$. This case will be treated in detail in a forthcoming publication.

\section{Discussion}

Theorem \ref{thm2} is a straightforward consequence of Theorem \ref{thm1}
obtained by truncating the initial Hamiltonian: the generalization of Theorem
\ref{thm2} to arbitrary Hamiltonian functions can be made along the same
lines. All these results are easily extended (under certain supplementary
assumptions) to the case where the window $\phi$ belongs to a modulation space
(cf. \cite{go14,gogrro15}).

\end{document}